\newcommand{\m}{^{-1}}
\newcommand{\varespilon}{\varepsilon}
\theoremstyle{plain}
\newtheorem{thmspecial}{Theorem}
\newtheorem{thm}{Theorem}[section]
\newtheorem{prop}[thm]{Proposition}
\newtheorem{lem}[thm]{Lemma}
\theoremstyle{definition}
\newtheorem{defn}{Definition}
\theoremstyle{remark}
\DeclareMathOperator{\Hamm}{Hamm}
\DeclareMathOperator{\Sym}{Sym}
\DeclareMathOperator{\id}{Id}
\DeclareMathOperator{\Aut}{Aut}
\DeclareMathOperator{\supp}{Supp}\newcommand{\Supp}{\supp}
\numberwithin{thm}{section}
\newcommand{\modulus}[1]{\left|#1\right|}
\begin{document}
\title{The Wreath Product of Two Sofic Groups is Sofic}      
\author{Ben Hayes and Andrew Sale}
\address{ Vanderbilt University\\
         Nashville, TN 37240}
\email{benjamin.r.hayes@vanderbilt.edu, andrew.sale@some.oxon.org}
\date{\today}

\begin{abstract} 
	Given sofic approximations for countable, discrete groups $G,H$, we construct a sofic approximation for their wreath product $G\wr H$. \end{abstract}

\maketitle

Sofic groups, introduced by Gromov \cite{GromovSofic} and developed by Weiss \cite{WeissSofic}, are a large class of groups which can be approximated, in some sense, by finite groups.

There are many examples, including all amenable groups, all residually finite groups, and all linear groups (by Malcev's Theorem).
However, because of the weakness of the approximation by finite groups, few permanence properties of soficity are properly understood. 
Relatively straight-forward examples include closure under direct product and increasing unions, and the soficity of residually sofic groups.
More substantial results generally require some amenability assumption.
For example, an amalgamated product of two sofic groups is know to be sofic if the amalgamated subgroup is amenable (see \cite{ESZ2},\cite{LPaun},\cite{DKP},\cite{PoppArg}). 
This was extended to encompass the fundamental groups of all graphs of groups with sofic vertex groups and amenable edge groups \cite{CHR}.
In the same paper, it is shown that the graph product of sofic groups is sofic.
Also, if $H$ is sofic and is a coamenable subgroup of $G$, then $G$ is sofic too \cite{ESZ1}.

We prove a new permanence result, namely that soficity is closed under taking wreath products:

\begin{thmspecial}\label{thmspecial:sofic wreath}
	Let $G,H$ be countable, discrete, sofic groups. Then $G\wr H$ is sofic.
\end{thmspecial}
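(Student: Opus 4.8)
The plan is to build the sofic approximation of $G\wr H=\left(\bigoplus_H G\right)\rtimes H$ inside a permutational wreath product of the symmetric groups coming from the approximations of the factors. Fix sofic approximations $\phi_n\colon G\to\Sym(d_n)$ and $\psi_n\colon H\to\Sym(e_n)$, where each $\Sym(m)$ carries the normalized Hamming metric $d_{\Hamm}$. The natural target is the imprimitive wreath product $\Sym(d_n)\wr_{[e_n]}\Sym(e_n)=\Sym(d_n)^{[e_n]}\rtimes\Sym(e_n)$, which acts faithfully on $[d_n]\times[e_n]$ and hence embeds in $\Sym(d_ne_n)$; the factor $\Sym(e_n)$ is intended to approximate the top group $H$ and the base $\Sym(d_n)^{[e_n]}$ to approximate the lamp group $\bigoplus_H G$. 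I would define $\Phi_n\colon G\wr H\to\Sym(d_n)\wr_{[e_n]}\Sym(e_n)$ by sending the top coordinate of $(f,h)$ to $\psi_n(h)$, and in the base placing at a point $b\in[e_n]$ the permutation $\phi_n(f(x))$ whenever $b$ carries the label $x\in H$, and $\id$ otherwise. The labelling is the heart of the matter: a point $b$ is assigned the label $x$ when $b=\psi_n(x)(p)$ for a chosen base point $p$, so that $b$ ``represents'' the position $x$ relative to $p$.

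Granting such a labelling, almost multiplicativity of $\Phi_n$ should follow formally. The top coordinate is controlled by $\psi_n(h)\psi_n(h')\close[\Hamm]\psi_n(hh')$; the base coordinates combine through $\phi_n(f(x))\phi_n(f'(x))\close[\Hamm]\phi_n((ff')(x))$; and the semidirect twist $(0,h)(f,e)(0,h)\m=({}^hf,e)$ is matched provided the labelling is approximately $\psi_n$-equivariant, i.e.\ $b=\psi_n(x)(p)$ forces $\psi_n(h)(b)$ to carry the label $hx$ for most $b$. This last requirement is exactly $\psi_n(h)\psi_n(x)\close[\Hamm]\psi_n(hx)$, so the equivariance is inherited from the soficity of $H$ on all but an $\varepsilon$-fraction of points, and tracking the accumulated errors across the finite set $F\subseteq G\wr H$ under consideration is routine.

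For the almost-freeness requirement the two types of element behave very differently. If $(f,h)$ has $h\neq e$, then $\Phi_n(f,h)$ moves every point $(a,b)$ with $\psi_n(h)(b)\neq b$, so $d_{\Hamm}(\Phi_n(f,h),\id)\geq 1-\varepsilon$ for free. The genuine difficulty is a pure lamp element $(f,e)$ with $f\neq 0$: its image fixes the fibre over every point of $[e_n]$ whose label does not lie in $\supp f$, so if each label is realised at only boundedly many points then $\Phi_n(f,e)$ is Hamming-close to $\id$ and the approximation is not even weakly injective. Thus the core task is to arrange that the labels in $\supp f$ occupy a \emph{definite} fraction of $[e_n]$, while keeping the labelling coherent enough that the multiplicativity of the previous paragraph survives.

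This tension is the main obstacle I anticipate, and it is exactly where the non-amenability (indeed possible non-residual-finiteness) of $H$ bites. A single base point labels each $x\in H$ only once, giving clean multiplicativity but vanishing lamp displacement; at the other extreme, labelling from a maximal family $D_n\subseteq[e_n]$ whose windows $\psi_n(S)(p)$ ($p\in D_n$) are pairwise disjoint for a window $S\supseteq\supp F$ yields $|D_n|\geq e_n/|S|^2$, so each label in $S$ is realised with density $\geq|S|^{-2}$ and each nontrivial lamp is moved on a fraction $\geq(1-\varepsilon)|S|^{-2}=:\kappa>0$ of the space. A uniform lower bound $\kappa>0$ is in fact all one needs, since replacing $\Phi_n$ by its $k$-fold tensor power $\Phi_n^{\otimes k}$ on $[d_ne_n]^{k}$ boosts the separation of every nontrivial element of $F$ to $1-(1-\kappa)^k\to 1$ while multiplying the multiplicative error only by $k$. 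The difficulty is that translating $D_n$ by $\psi_n(h)$ destroys the invariance of the labelled region, so the equivariance underlying multiplicativity degrades by a fixed amount. The hard part will therefore be to choose $\psi_n$ with enough uniformity—so that a positive-density, approximately $\psi_n$-equivariant labelling exists—making a constant lower bound on lamp displacement coexist with $o(1)$ multiplicative error; establishing this balance is, I expect, the technical crux on which the whole argument turns.
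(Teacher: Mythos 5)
You stop short of a proof: your final paragraph explicitly defers what you call the technical crux --- making a positive-density, approximately equivariant labelling coexist with $o(1)$ multiplicative error --- so the argument is incomplete precisely at the point where soficity of $G\wr H$ would be established. That said, your pessimism about this step is misplaced. If the window $S$ is enlarged so that it contains not only $\supp(\pi_G(F_0))$ but also every translate $h\supp(g)$ with $h\in\pi_H(F_0)$, $g\in\pi_G(F_0)$ (the analogue of the paper's set $E$), then the lamps of $F$ only ever light sites lying in a core of each window, and the boundary effects you fear never couple to a nontrivial lamp: for $b=\psi_n(x)p$ with $x$ in that core, $\psi_n(h)b=\psi_n(hx)p$ (hence carries label $hx$) except for a proportion $O(\varepsilon')$ of the base points, by multiplicativity of $\psi_n$; conversely, if $\psi_n(h)b$ carries a label in $h\supp(f)$, then disjointness of the windows together with injectivity inside each window forces $b$ to carry the label predicted by equivariance, again up to an $O(\varepsilon')$-fraction. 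So the equivariance error is $O(|S|^{2}\varepsilon')$, not a fixed constant, and your disjoint-window construction plus tensor-power amplification could be pushed through. But note that the amplification is genuinely indispensable in your approach: on $[d_n]\times[e_n]$ a nontrivial lamp element can never move more than the labelled fraction of the space, so $(F,\varepsilon)$-freeness is unattainable without it, and you would also need to justify that amplification preserves approximate multiplicativity for all of $F$ (it does, with error multiplied by $k$).

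The paper dissolves this tension rather than balancing it, by changing the space acted on: instead of $[d_n]\times[e_n]$ it uses $\left(\bigoplus_B A\right)\oplus B$, i.e.\ $A^B\times B$, where $A,B$ are the approximation sets for $G,H$. Every $b\in B$ is then its own base point: a lamp configuration $g=(g_x)$ acts on the $A^B$-coordinate by applying $\sigma_A(g_x)$ in the coordinate indexed by $\sigma_B(x)b$, while $H$ acts only by translating the $B$-coordinate. Because the base point is part of the point being permuted, the equivariance $\widehat{\sigma_B}(h)\widehat{\sigma_A}(g)=\widehat{\sigma_A}(\alpha_h(g))\widehat{\sigma_B}(h)$ holds exactly on the good set $B_0$ (where $\sigma_B$ is injective and multiplicative on $E$), and --- this is the decisive gain over your construction --- a nontrivial lamp moves almost every point: a pair $(a,b)$ with $b\in B_0$ is fixed by $\widehat{\sigma}(g,1)$ only if the single coordinate of $a$ indexed by $\sigma_B(x_0)b$, for $x_0\in\supp(g)$, is fixed by $\sigma_A(g_{x_0})$, an event of proportion at most $\varepsilon'$ in the product $A^B$. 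So freeness at level $1-\kappa-\varepsilon'$ is automatic, no amplification or weak-to-strong soficity argument is needed, and the whole proof reduces to one construction checked against Lemma \ref{l:almosthomom}. The exponential size $|A|^{|B|}|B|$ of the paper's model is the price, and it costs nothing, since soficity imposes no bound on the size of the approximating sets.
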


We remark that our result is general and requires no amenability or residual finiteness assumptions. 
The special case of Theorem \ref{thmspecial:sofic wreath} when $G$ is abelian was proved by Paunescu \cite{LPaun}, who used methods of analysis and the notion of sofic equivalence relations developed by Elek and Lippner \cite{ElekLip}.
While finishing this paper, we learnt that Holt and Rees have dealt with the case when $H$ is residually finite and $G$ sofic \cite{HR}.
We prove the general result directly, by constructing a sofic approximation for the wreath product, giving a proof that is constructive, quantitative (see Proposition \ref{prop:sofic approx}), and entirely self-contained.

The notion of hyperlinearlity gives a class of groups defined in a similar vein to sofic groups, but where they are approximated instead by unitary groups. Our construction extends to the situation where $G$ is hyperlinear and $H$ sofic, showing that $G\wr H$ is hyperlinear, see \cite{BHhomepage}.

	  Via their approximations by finite groups, sofic groups have applications to problems of current mathematical interest in a wide area of fields. 
	  Sofic groups are relevant to ergodic theory because they are the largest class of groups for which Bernoulli shifts are classified by their base entropy (see \cite{Bow},\cite{KLi}) and for which Gottschalk's surjuncitivity conjecture holds (see e.g. \cite{GromovSofic},\cite{KLi}).
	  In the study of group rings they are useful because they are the largest class of groups for which Kaplansky's direct finiteness conjecture (see \cite{ESZKaplansky}) is known.
	  In the field of $L^{2}$--invariants, they are the largest class of groups for which the determinant conjecture is known (see \cite{ElekSzaboDeterminant}), which is necessary to define $L^{2}$--torsion (see \cite{Luck} Conjecture 3.94). They are also the largest class of groups for which an analogue of L\"{u}ck approximation is known (see \cite{Thom}).
	  We refer the reader to \cite{Luck} for applications of $L^{2}$--invariants to geometry and group theory. See also \cite{Pestov},\cite{CapLup} for a survey of sofic groups.

\textbf{Acknowledgments.} The first named author would like to thank Jesse Peterson for asking him if wreath products of sofic groups are sofic at the NCGOA Spring Institute in 2012 at Vanderbilt University.

\section{Preliminaries}\label{sec:prelims}

We begin with the necessary definitions, as well as a useful lemma to help us identify sofic approximations in wreath products.

\begin{defn} Let $A$ be a finite set. The \emph{normalized Hamming distance}, denoted $d_{\Hamm},$ on $\Sym(A)$ is defined by
\[d_{\Hamm}(\sigma,\tau)=\frac{1}{|A|}|\{a\in A:\sigma(a)\ne \tau(a)\}|.\]
\end{defn}

\begin{defn} Let $G$ be a countable discrete group, $F$  a finite subset of $G$, and $\varespilon>0.$ Fix a finite set $A$ and a function $\sigma\colon G\to \Sym(A).$ We say that $\sigma$ is \mbox{\emph{$(F,\varepsilon)$--multiplicative}} if
\[\max_{g,h\in F}d_{\Hamm}(\sigma(g)\sigma(h),\sigma(gh))<\varepsilon.\]
 We say that $\sigma$ is \emph{$(F,\varepsilon)$--free} if
\[\min_{g\in F\setminus\{1\}}d_{\Hamm}(\sigma(g),\id)> 1-\varepsilon.\]
We say that $\sigma$ is an \emph{$(F,\varepsilon)$--sofic approximation} if it is $(F,\varepsilon)$--multiplicative,  $(F,\varepsilon)$--free,  and furthermore $\sigma(1)=\id.$ 
Lastly, we say that $G$ is \emph{sofic} if for every finite $F\subseteq G$ and $\varepsilon>0,$ there is a finite set $A$ and an $(F,\varepsilon)$--sofic approximation $\sigma\colon G\to \Sym(A).$
\end{defn}

Our aim is to use sofic approximations for $G$ and $H$ and build a sofic approximation for $G\wr H$.
First recall that the wreath product is defined as 
		$$G\wr H = \bigoplus\limits_{H}G \rtimes H$$
where the action of $h\in H$ is given via $\alpha_{h}\in \Aut\left(\bigoplus_{H}G\right)$, defined by
		\[\alpha_{h}\Big( (g_{x})_{x\in H} \Big)=(g_{h\m x})_{x\in H}.\]
A homomorphism $\pi\colon G\wr H\to K$, for some group $K$, can be decomposed into a pair of homomorphisms $\pi_{1}\colon \bigoplus_{H}G\to K$, $\pi_{2}\colon H\to K$ which satisfy the following equivariance condition:
\[\pi_{2}(h)\pi_{1}(g)=\pi_{1}(\alpha_{h}(g))\pi_{2}(h),\mbox{ for all $h\in H,g\in \bigoplus_{H}G.$}\]
The following lemma gives an ``approximate analogue'' to this situation.

\begin{lem}\label{l:almosthomom} Let $G,H$ be countable, discrete groups. 
For every finite set $F_0\subseteq G\wr H$  there are finite  sets $E_{1}\subseteq \bigoplus_{H}G,E_{2}\subseteq H$ 
such that the following holds:
Let $\varepsilon>0$ and $\Omega$ be a finite set. Suppose $\sigma\colon G\wr H\to \Sym(\Omega)$ is a map such that  

\begin{itemize}
	\item the restriction of $\sigma$ to $\displaystyle \bigoplus_{H}G$ is $(E_{1},\varepsilon/6)$--multiplicative,
	\item the restriction of $\sigma$ to $H$ is $(E_{2},\varepsilon/6)$--multiplicative, \phantom{$\displaystyle \bigoplus_{H}G$} 
	\item $\displaystyle \max_{g\in E_{1},h\in E_{2}}d_{\Hamm}\big(\sigma(g,h),\sigma(g,1)\sigma(1,h)\big)<\varepsilon/6$,
	\phantom{$\displaystyle \bigoplus_{H}G$} 
	\item $\displaystyle \max_{g\in E_{1},h\in E_{2}}d_{\Hamm}\left(\sigma(1,h)\sigma(g,1),\sigma(\alpha_{h}(g),1)\sigma(1,h)\right)<\varepsilon/6$.
	\phantom{$\displaystyle \bigoplus_{H}G$} 
\end{itemize}
Then $\sigma$ is $(F_0,\varepsilon)$--multiplicative.

\end{lem}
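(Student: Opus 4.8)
The plan is to reduce multiplicativity on $F_0$ to the four hypotheses by writing each $\sigma(g,h)$ as (approximately) $\sigma(g,1)\sigma(1,h)$ and then telescoping, using throughout that $d_{\Hamm}$ is a bi-invariant metric on $\Sym(\Omega)$, i.e. $d_{\Hamm}(\rho a,\rho b)=d_{\Hamm}(a\rho,b\rho)=d_{\Hamm}(a,b)$, together with the triangle inequality. Write $a\approx_\delta b$ as shorthand for $d_{\Hamm}(a,b)<\delta$. Let $S_G=\{g:(g,h)\in F_0\}$ and $S_H=\{h:(g,h)\in F_0\}$ be the (finite) sets of $G$- and $H$-coordinates appearing in $F_0$, and set
\[E_2=S_H\cup\{h_1h_2: h_1,h_2\in S_H\},\]
\[E_1=S_G\cup\{\alpha_h(g): g\in S_G,\,h\in S_H\}\cup\{g_1\alpha_{h_1}(g_2): g_1,g_2\in S_G,\,h_1\in S_H\}.\]
Both are finite because $F_0$ is finite; the three pieces of $E_1$ and the two pieces of $E_2$ are chosen to be exactly the $\bigoplus_H G$- and $H$-coordinates that will arise as intermediate terms below.

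Fix $(g_1,h_1),(g_2,h_2)\in F_0$; recalling that $(g_1,h_1)(g_2,h_2)=(g_1\alpha_{h_1}(g_2),h_1h_2)$, I must bound $d_{\Hamm}\big(\sigma(g_1,h_1)\sigma(g_2,h_2),\sigma(g_1\alpha_{h_1}(g_2),h_1h_2)\big)$. I would run the following chain, where each step changes one factor inside a product and hence, by bi-invariance, incurs error equal to the distance between the changed factors. First, the third hypothesis gives $\sigma(g_1,h_1)\approx_{\varepsilon/6}\sigma(g_1,1)\sigma(1,h_1)$ and $\sigma(g_2,h_2)\approx_{\varepsilon/6}\sigma(g_2,1)\sigma(1,h_2)$ (two steps). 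Next, the equivariance hypothesis applied to the middle pair gives $\sigma(1,h_1)\sigma(g_2,1)\approx_{\varepsilon/6}\sigma(\alpha_{h_1}(g_2),1)\sigma(1,h_1)$ (one step). Then $(E_1,\varepsilon/6)$-multiplicativity on $\bigoplus_H G$ combines $\sigma(g_1,1)\sigma(\alpha_{h_1}(g_2),1)\approx_{\varepsilon/6}\sigma(g_1\alpha_{h_1}(g_2),1)$, and $(E_2,\varepsilon/6)$-multiplicativity on $H$ combines $\sigma(1,h_1)\sigma(1,h_2)\approx_{\varepsilon/6}\sigma(1,h_1h_2)$ (two steps). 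Finally the third hypothesis, run backwards, gives $\sigma(g_1\alpha_{h_1}(g_2),1)\sigma(1,h_1h_2)\approx_{\varepsilon/6}\sigma(g_1\alpha_{h_1}(g_2),h_1h_2)$ (one step). At each application the relevant coordinates lie in $E_1$ or $E_2$ precisely because of how those sets were defined.

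Summing the six steps via the triangle inequality yields distance $<6\cdot(\varepsilon/6)=\varepsilon$, which is exactly $(F_0,\varepsilon)$-multiplicativity. I do not expect a genuine obstacle: the argument is a bookkeeping exercise in the bi-invariance of the Hamming metric. The only points requiring care are (i) ensuring $E_1,E_2$ are large enough that every intermediate element — namely $\alpha_{h_1}(g_2)$, $g_1\alpha_{h_1}(g_2)$, $h_1h_2$, and the coordinates themselves — is covered by the appropriate hypothesis, and (ii) checking that the total error budget is exactly six applications of $\varepsilon/6$, which is presumably the reason the hypotheses are stated with $\varepsilon/6$ rather than a coarser fraction.
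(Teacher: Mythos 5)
Your proof is correct, and it carries out exactly the kind of telescoping verification (bi-invariance of $d_{\Hamm}$ plus six triangle-inequality steps of size $\varepsilon/6$) that the paper gestures at before leaving the details to the reader. However, your choice of $E_1,E_2$ is genuinely different from the paper's, and the difference matters: the paper only requires that $g,\hat{g},\alpha_{h}(\hat{g})\in E_1$ and $h,\hat{h}\in E_2$ for $(g,h),(\hat{g},\hat{h})\in F_0$, and accordingly takes $E_1=\{\alpha_h(g):h\in\pi_H(F_0)\cup\{1\},\,g\in\pi_G(F_0)\}$, $E_2=\pi_H(F_0)$. That requirement is not sufficient. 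The last step of the telescope must recombine $\sigma(g_1\alpha_{h_1}(g_2),1)\,\sigma(1,h_1h_2)$ into $\sigma(g_1\alpha_{h_1}(g_2),h_1h_2)$, and the only hypothesis that constrains $\sigma$ at an element with both coordinates nontrivial is the third one, which applies only when those coordinates lie in $E_1\times E_2$. With the paper's sets the product coordinates $g_1\alpha_{h_1}(g_2)$ and $h_1h_2$ generally lie outside $E_1$ and $E_2$: for instance with $G=H=\mathbb{Z}$, $H=\langle t\rangle$, and $F_0=\{(\delta,t)\}$ where $\delta$ is supported at $1\in H$, one gets $E_1=\{\delta,\alpha_t(\delta)\}$ and $E_2=\{t\}$, while $(\delta,t)^2=(\delta\alpha_t(\delta),t^2)$ has neither coordinate in the corresponding set; the four hypotheses then place no constraint whatsoever on $\sigma(\delta\alpha_t(\delta),t^2)$, so $(F_0,\varepsilon)$-multiplicativity cannot follow. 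Your enlarged sets, which add $\{g_1\alpha_{h_1}(g_2)\}$ to $E_1$ and $\{h_1h_2\}$ to $E_2$, are precisely what legitimizes the sixth step, so your argument is not merely a completion of the paper's sketch but a repair of it. The repair is harmless downstream: the proof of Proposition \ref{prop:sofic approx} goes through verbatim with $E$, $E_A$, $E_B$ built from these slightly larger (still finite) sets.
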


\begin{proof}

After making the right definitions for $E_1,E_2$, we apply the triangle inequality several times to obtain the result.
We require that if $(g,h),(\hat{g},\hat{h})$ are in $F_0$, then  $g, \hat{g},\alpha_h(\hat{g}) \in E_1$, and $h,\hat{h}\in E_2$. 
This is true if we define 
		\begin{align*}
		E_1 &= \big\{ \alpha_h (g) : h\in \pi_H(F_0) \cup \{1\} , g\in \pi_G(F_0) \big\}, \\
		E_2 &= \pi_H(F_0).
		\end{align*}
 We leave verification that this is sufficient to the reader.
\end{proof}

To see how this gives an ``approximate analogue'' of the situation for homomorphisms, notice that the above lemma  says that an approximate homomorphism $\sigma\colon G\wr H\to \Sym(A)$ can be thought of as a pair of approximate homomorphisms $\sigma_{1}\colon \bigoplus_{H}G\to \Sym(A)$, $\sigma_{2}\colon H\to\Sym(A)$ so that
\[\sigma_{2}(h)\sigma_{1}(g) \approx \sigma_{1}(\alpha_{h}(g))\sigma_{2}(g)\]
for all $g$ in a large enough finite subset of $\bigoplus_{H}G$ and all $h$ in a large enough finite subset of $H.$ (Here we use $\approx$ to indicate that both maps agree on a sufficiently large subset of $A$). It is not hard to see that the above lemma is valid with $G\wr H$ replaced with any semidirect product and $\Sym(A)$ replaced with any group equipped with a bi-invariant metric.

\section{The sofic approximation for wreath products}\label{sec:wreath sofic}

To facilitate our proof, we need to introduce some notation. Let $G,H$ be countable discrete groups and $\sigma_{A}\colon G\to \Sym(A),\ \sigma_{B}\colon H\to \Sym(B)$ be two functions (not assumed to be homomorphisms). For $h\in H,b_{0}\in B,$ define
\[\sigma_{A,b_{0}}^{(h)}\colon G\to \Sym\left(\bigoplus_{B}A\right)\]
by
\[\sigma_{A,b_{0}}^{(h)}(g)\Big((a_{b})_{b\in B}\Big)=(\widehat{a}_{b})_{b\in B},\]
where
\[\widehat{a}_{b}=\begin{cases}
\sigma_{A}(g)(a_{b}),& \textnormal{ if $b=\sigma_{B}(h)b_{0}$,}\\
a_{b},& \textnormal{ otherwise.}
\end{cases}\]
Now suppose that $E\subseteq H$ is finite, and take  a subset 
\[B_{0}\subseteq \{b\in B:\sigma_{B}(h_{1})b\ne \sigma_{B}(h_{2})b\mbox{ for all $h_{1},h_{2}\in E$ with $h_{1}\ne h_{2}.$}\}.\]
Note that $\sigma_{A,b_{0}}^{(h_{1})}(g_{1})$ and $\sigma_{A,b_{0}}^{(h_{2})}(g_{2})$ commute if $b\in B_{0},g_{1},g_{2}\in G,h_{1},h_{2}\in H$ and $h_{1}\ne h_{2}.$ Thus it makes sense to define, for $b\in B_{0},$
\[\sigma_{A,b}\colon \bigoplus_{E}G\to \Sym\left(\bigoplus_{B}A\right)\]
by
\[\sigma_{A,b}\Big((g_{x})_{x\in E}\Big)=\prod_{x\in E}\sigma_{A,b}^{(x)}(g_{x}).\]
In our applications $\sigma_{B}$ will be a sofic approximation, so we can take $B_{0}$ to  make up the majority of $B$. Thus $\sigma_{A,b}$ will be defined for ``most" $b\in B.$ We package all these maps together as a single map
\[\widehat{\sigma_{A}}\colon \bigoplus_{E}G\to \Sym\left(\left(\bigoplus_{B}A\right)\oplus B\right)\]
by
\[\widehat{\sigma_{A}}(g)(a,b)=\begin{cases}
\big(\sigma_{A,b}(g)(a),b\big),& \textnormal{if $b\in B_{0}$}\\
(a,b),& \textnormal{if $b\in B\setminus B_{0}$.}
\end{cases}\]
We extend $\widehat{\sigma_{A}}$ to $\bigoplus_{H}G$ by declaring that $\widehat{\sigma_{A}}(g)=\id$ if $g\in \bigoplus_{H}G,$ but $g\notin \bigoplus_{E}G.$ Now define
\[\widehat{\sigma_{B}}\colon H\to \Sym\left(\left(\bigoplus_{B}A\right)\oplus B\right)\]
by
\[\widehat{\sigma_{B}}(h)(a,b)=\big(a,\sigma_{B}(h)b\big).\]
 Finally, define
\[\widehat{\sigma}\colon G\wr H\to \Sym\left(\left(\bigoplus_{B} A\right)\oplus B\right)\]
by
\[\widehat{\sigma}(g,h)=\widehat{\sigma_{A}}(g)\widehat{\sigma_{B}}(h).\]
 Note that $\widehat{\sigma}$ is determined by the choice of $E,B_{0}$ and the two maps $\sigma_A$ and $\sigma_B$.

\begin{prop}\label{prop:sofic approx}
	Let $F\subseteq G\wr H$ be finite and $\varepsilon>0.$ Then there are finite sets $E_A\subseteq G,E_B\subseteq H$ and an $\varepsilon'>0$ so that if $\sigma_A\colon G\to \Sym(A)$ and $\sigma_B\colon H\to \Sym(B)$ are $(E_A,\varepsilon'),(E_B,\varepsilon')$--sofic approximations for $G,H$ respectively, then $\widehat{\sigma}$ is an $(F,\varepsilon)$--sofic approximation.
\end{prop}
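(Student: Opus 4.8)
The plan is to apply Lemma \ref{l:almosthomom} with $F_0=F$, reducing the $(F,\varepsilon)$-multiplicativity of $\widehat\sigma$ to the four bulleted estimates, and then to choose the data $E,B_0,E_A,E_B,\varepsilon'$ so that each estimate, together with freeness, follows from the soficity of $\sigma_A$ and $\sigma_B$. Concretely, let $E_1\subseteq\bigoplus_H G$ and $E_2=\pi_H(F)\subseteq H$ be the sets produced by the lemma, let $E\subseteq H$ be a finite set containing the $H$-supports of all elements of $E_1$, take $E_A\subseteq G$ to contain $1$ together with the $G$-entries $g_x$ of all $g\in E_1$, and take $E_B\subseteq H$ to contain $E\cup E_2$ together with the inverses and products $xh^{\pm1}$ and $h_1\m h_2$ needed below. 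The set $B_0$ will sit inside the prescribed collision-free set; this is the first place $\sigma_B$ is used: for distinct $h_1,h_2\in E$ the number of $b$ with $\sigma_B(h_1)b=\sigma_B(h_2)b$ is $|B|\,(1-d_{\Hamm}(\id,\sigma_B(h_1)\m\sigma_B(h_2)))$, and approximate multiplicativity gives $\sigma_B(h_1)\m\sigma_B(h_2)\approx\sigma_B(h_1\m h_2)$, which freeness keeps at distance $>1-\varepsilon'$ from $\id$; a union bound over the finitely many pairs yields $B_0$ with $|B\setminus B_0|<\delta|B|$ for any prescribed $\delta$.

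For the first two bullets, note that $\widehat{\sigma_A}(g)$ acts only on the $\bigoplus_B A$ coordinate and $\widehat{\sigma_B}(h)$ only on the $B$ coordinate, so each estimate decouples fibrewise. On the fibre over $b\in B_0$ the map $\widehat{\sigma_A}(g)$ is the product $\prod_{x\in E}\sigma_{A,b}^{(x)}(g_x)$ over disjoint coordinates, so $d_{\Hamm}(\widehat{\sigma_A}(g)\widehat{\sigma_A}(g'),\widehat{\sigma_A}(gg'))$ is bounded by a union bound over $x\in E$ of the defects $d_{\Hamm}(\sigma_A(g_x)\sigma_A(g'_x),\sigma_A(g_xg'_x))<\varepsilon'$, plus the contribution $|B\setminus B_0|/|B|$; likewise $d_{\Hamm}(\widehat{\sigma_B}(h)\widehat{\sigma_B}(h'),\widehat{\sigma_B}(hh'))=d_{\Hamm}(\sigma_B(h)\sigma_B(h'),\sigma_B(hh'))<\varepsilon'$. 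The third bullet in fact holds exactly: since $\sigma_A(1)=\id$ and $\sigma_B(1)=\id$ we have $\widehat{\sigma_A}(1)=\widehat{\sigma_B}(1)=\id$, whence $\widehat\sigma(g,1)\widehat\sigma(1,h)=\widehat{\sigma_A}(g)\widehat{\sigma_B}(h)=\widehat\sigma(g,h)$.

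The heart of the argument is the fourth bullet, the approximate equivariance $\widehat{\sigma_B}(h)\widehat{\sigma_A}(g)\approx\widehat{\sigma_A}(\alpha_h(g))\widehat{\sigma_B}(h)$. Conjugating by the $B$-shift $\widehat{\sigma_B}(h)$ replaces the base point $b$ by $\sigma_B(h)\m b$, so $\widehat{\sigma_B}(h)\widehat{\sigma_A}(g)\widehat{\sigma_B}(h)\m$ acts on the fibre over $b$ by $\sigma_{A,c}(g)$ with $c=\sigma_B(h)\m b$, which applies $\sigma_A(g_x)$ to the coordinate $\sigma_B(x)c=\sigma_B(x)\sigma_B(h)\m b$; the target $\widehat{\sigma_A}(\alpha_h(g))$ applies $\sigma_A((\alpha_h(g))_x)$ to the coordinate $\sigma_B(x)b$. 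Matching these for all $x$ in the support and all but a few $b$ requires identifying the coordinates via $\sigma_B(x)\sigma_B(h)\m b\approx\sigma_B(xh\m)b$ together with the relabelling of $G$-entries dictated by $\alpha_h$; the error is absorbed by a union bound over the finite support of the defects $d_{\Hamm}(\sigma_B(x)\sigma_B(h)\m,\sigma_B(xh\m))$, the defect $d_{\Hamm}(\sigma_B(h)\m,\sigma_B(h\m))$, and the fibres with $\sigma_B(h)\m b\notin B_0$. I expect this bookkeeping---correctly tracking how the approximate $H$-action on $B$ permutes the localized copies of $G$ and verifying that it realizes $\alpha_h$ on all but an $\varepsilon$-fraction of fibres---to be the main obstacle, and the one place where the precise wreath-product conventions must be handled with care.

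Finally, for $(F,\varepsilon)$-freeness take $(g,h)\in F\setminus\{1\}$. If $h\ne1$ then $\widehat\sigma(g,h)$ sends the $B$-coordinate $b$ to $\sigma_B(h)b$, which differs from $b$ for at least a $(1-\varepsilon')$-fraction of $b$ by freeness of $\sigma_B$ (as $h\in E_B$), so $d_{\Hamm}(\widehat\sigma(g,h),\id)>1-\varepsilon'$. If $h=1$ but $g\ne1$, pick $x_0\in E$ with $g_{x_0}\ne1$; on each fibre over $b\in B_0$ the map $\widehat{\sigma_A}(g)$ applies $\sigma_A(g_{x_0})$ to the $\sigma_B(x_0)b$ coordinate, and freeness of $\sigma_A$ (as $g_{x_0}\in E_A$) moves that entry for all but an $\varepsilon'$-fraction of tuples, giving $d_{\Hamm}(\widehat\sigma(g,1),\id)>(1-\varepsilon')\,|B_0|/|B|$. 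Together with $\widehat\sigma(1)=\id$ this yields the three required properties once $\varepsilon'$ is taken small and $E_A,E_B$ large enough that the finitely many union-bound errors above are each below $\varepsilon/6$ while the freeness bounds exceed $1-\varepsilon$; since all the relevant multiplicities are bounded in terms of $F$ alone, such a choice exists.
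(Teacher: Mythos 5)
Your overall architecture is the same as the paper's: reduce multiplicativity to Lemma \ref{l:almosthomom}, choose $E$, $E_A$, $E_B$, cut out a large set $B_0$ of good base points using the multiplicativity and freeness of $\sigma_B$, verify the four bullets, and prove freeness separately. Your treatment of bullets 1--3, of the size of $B_0$, and of the freeness step matches the paper's Step 1/Step 2 essentially line for line. But there is a genuine gap exactly where you write ``I expect this bookkeeping \dots to be the main obstacle'': the fourth bullet is never proved, and it is the entire content of the proposition --- everything else is routine union bounds. Worse, the matching problem as you have set it up cannot be closed by accumulating small defects. After conjugation, $\widehat{\sigma_B}(h)\widehat{\sigma_A}(g)\widehat{\sigma_B}(h)^{-1}$ applies $\sigma_A(g_x)$ at the coordinates $\sigma_B(x)\sigma_B(h)^{-1}b\approx\sigma_B(xh^{-1})b$ for $x\in\supp(g)$: a \emph{right}-translation pattern in the index. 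The target $\widehat{\sigma_A}(\alpha_h(g))$ applies $\sigma_A(g_x)$ at the coordinates $\sigma_B(hx)b$, since $\supp(\alpha_h(g))=h\supp(g)$: a \emph{left}-translation pattern. For nonabelian $H$ these are different coordinates; indeed on a collision-free set of base points they are guaranteed to disagree whenever $xh^{-1}\neq hx$ (with both in $E$). No union bound over $O(\varepsilon')$ defects reconciles $\sigma_B(xh^{-1})b$ with $\sigma_B(hx)b$, so the estimate you defer is not bookkeeping: as stated, it fails.

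What is needed --- and what constitutes the paper's key step --- is an \emph{exact} identity on the good set, not an approximate one. The paper builds the multiplicativity set $B_{02}$ into $B_0$ (your $B_0$ contains only the collision-free condition $B_{01}$) precisely so that it can assert $\sigma_{A,\sigma_B(h)b}^{(hx)}(g)=\sigma_{A,b}^{(x)}(g)$ for $b\in B_0$, whence $\sigma_{A,\sigma_B(h)b}(\alpha_h(g))=\sigma_{A,b}(g)$ exactly for every $b\in B_0\cap\sigma_B(h)^{-1}B_0$; the only error terms are then the two bad-base-point sets, each of measure at most $\kappa$. Making that identity hold is exactly a matter of choosing the localization convention compatibly with the side on which $\alpha_h$ shifts supports (for instance, localizing $g_x$ at the coordinate $\sigma_B(x)^{-1}b$ rather than $\sigma_B(x)b$, or equivalently defining the automorphism $\alpha$ by right translation of indices); with the compatible choice the two patterns coincide on $B_{02}$, and with the incompatible one they never do. This left/right meshing is the one nontrivial idea in the proof, and your proposal stops just short of it.
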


The remainder of this section is dedicating to proving Proposition \ref{prop:sofic approx}.
We will see below that it is possible to compute an explicit upper bound on $\varepsilon'$.
It will depend only on $\varepsilon$ and the set $F$.

We remark that $\widehat{\sigma}(1,1)=\id$ by construction. We need to show it is $(F,\varepsilon)$--multiplicative and $(F,\varepsilon)$--free.
First we explain how to define the sets $E$, $E_A$ and $E_B$.

Let $F\subseteq G\wr H$ be finite and $\varepsilon>0$.
Define projections $\pi_{G}\colon G\wr H\to \bigoplus_{H}G$ and $\pi_{H}\colon G\wr H\to H$ by $\pi_{G}(g,h)=g,\pi_{H}(g,h)=h$.
Let $E_{1},E_{2}$ be as in Lemma \ref{l:almosthomom} for the finite set $F_0=F\cup \{1\} \cup F\m$.
As in the proof of Lemma \ref{l:almosthomom}, we have
		\begin{align*}
		E_1 &= \big\{ \alpha_h (g) : h\in \pi_H(F_0) , g\in \pi_G(F_0) \big\}, \\
		E_2 &= \pi_H(F_0).
		\end{align*}
Recall that for $g=(g_{x})_{x\in H}\in \oplus_{H}G$ the support of $g,$ denoted $\supp(g),$ is the set of $x\in H$ with $g_{x}\ne 1.$  We set
\[E = E_2 \cup \bigcup_{\substack{g\in E_1\\h\in E_2}}h\supp(g),\]
\[E_A = \big\{ g_x \in G : (g_x)_{x\in H} \in E_1\big\},\] 
\[E_{B}=E\m E .\]
Then our collections of finite sets satisfy the following properties, each of which we need later on:
\begin{eqnarray*}
\label{condition E_A} E_A &\supseteq & \{ g_x : (g_x)\in E_1, x\in E\},\\
\label{condition E} E\phantom{_1}  & \supseteq & h\Supp(g) \mbox{ for all $h\in E_2, g \in E_1$},\\
\label{condition E_2} E\phantom{_1}  &  \supseteq & E_2,\\
\label{condition E_B} E_{B} & \supseteq & E\cup E^{-1}\cup E\m E.	
\end{eqnarray*} 
Choose $\varepsilon'$ so that  $0<\varepsilon'<\frac{\varepsilon}{48\modulus{E}^2}$. 
Let $\sigma_{A}\colon G\to \Sym(A),\ \sigma_{B}\colon H\to \Sym(B)$ be $(E_A,\varepsilon'),(E_B,\varepsilon')$--sofic approximations respectively.
Set
\[B_{01}=\{b\in B:\sigma_B(h_{1})b\ne \sigma_B(h_{2})b\mbox{ for all $h_{1},h_{2}\in E,h_{1}\ne h_{2}$}\},\]
\[B_{02}=\{b\in B:\sigma_B(h_{1}h_{2})b=\sigma_B(h_{1})\sigma_B(h_{2})b\mbox{ for all $h_{1},h_{2}\in E$}\},\]
\[B_{0}=B_{01}\cap B_{02}.\]
Since $\sigma_B$ is a sofic approximation, we can intuitively think of $B_0$ as making up most of the set $B$. 
Indeed, Lemma \ref{lem:prelim obs} below confirms this.
Use the sets $E \subset H,B_{0}\subseteq B$ and the maps $\sigma_A,\sigma_B$ to define the maps $\widehat{\sigma_A},\widehat{\sigma_B},\widehat{\sigma}$, as constructed at the start of this section. We claim that the map $\widehat{\sigma}$ is an $(F,\varepsilon)$--sofic approximation.
We first make the following preliminary observation.  

\begin{lem}   \label{lem:prelim obs}
	Let $\kappa>0$.  If $ \varepsilon' < \frac{\kappa}{4\modulus{E}^2}$ then  $|B_{0}|\geq (1-\kappa)|B|.$
\end{lem}

\begin{proof}
	Note that
\[B_{01}=\bigcap_{\substack{h_{1},h_{2}\in E\\h_1\ne h_2}}\{b\in B:\sigma_B(h_{1})b\ne \sigma_B(h_{2})b\},\]
\[B_{02}=\bigcap_{\substack{h_{1},h_{2}\in E\\h_1\ne h_2}}\{b\in B:\sigma_B(h_{1})\sigma_B(h_{2})b=\sigma_B(h_{1}h_{2})b\}.\]
So
\[\frac{\modulus{B\setminus B_{01}}}{\modulus{B}}\leq \sum_{\substack{h_{1},h_{2}\in E\\h_1\ne h_2}}\left(1-\frac{\modulus{ \{b\in B:\sigma_B(h_{1})b\ne \sigma_B(h_{2})b\}}}{\modulus{B}}\right).\]
We have for all $h_{1},h_{2}\in E$ with $h_{1}\ne h_{2}:$
\begin{eqnarray*}\frac{\modulus{\{b\in B:\sigma_{B}(h_{1})b\ne \sigma_{B}(h_{2})b\}}}{\modulus{B}}&=&d_{\Hamm}(\sigma_{B}(h_{1}),\sigma_{B}(h_{2}))\\
&=&d_{\Hamm}(\sigma_{B}(h_{2})^{-1}\sigma_{B}(h_{1}),\id).\\
\end{eqnarray*}
Since $d_{\Hamm}$ is a invariant under left multiplication, and $E_B\supseteq E\cup E\m$ we have that 
\[d_{\Hamm}(\sigma_{B}(h_{2})^{-1},\sigma_{B}(h_{2}^{-1}))=d_{\Hamm}(\id,\sigma_{B}(h_{2})\sigma_{B}(h_{2}^{-1}))<\varepsilon'.\]
Inserting this into the above two inequalities and using that $d_{\Hamm}$ is invariant under right multiplication we see that:
\begin{eqnarray*}\frac{\modulus{\{b\in B:\sigma_{B}(h_{1})b\ne \sigma_{B}(h_{2})b\}}}{\modulus{B}}&=&d_{\Hamm}(\sigma_{B}(h_{1}),\sigma_{B}(h_{2}))\\
&=&d_{\Hamm}(\sigma_{B}(h_{2})^{-1}\sigma_{B}(h_{1}),\id)\\
&> &d_{\Hamm}(\sigma_{B}(h_{2}^{-1})\sigma_{B}(h_{1}),\id)-\varepsilon'\\
&\geq& d_{\Hamm}(\sigma_{B}(h_{2}^{-1}h_{1}),\id)-2\varepsilon'\\
&> & 1-3\varepsilon',
\end{eqnarray*}
where in the last two lines we again use that  $E_B\supseteq E\cup E\m\cup E\m E$.  Thus
\[\frac{\modulus{B_{01}}}{\modulus{B}}\geq (1-3\modulus{E}^{2}\varepsilon').\]
Similarly, $(E_B,\varepsilon')$--multiplicativity of $\sigma_B$ gives
\[\frac{\modulus{B_{02}}}{\modulus{B}}\geq 1- \sum_{h_{1},h_{2}\in E}\Big(1-d_{\Hamm}\big(\sigma_{B}(h_{1}h_{2}),\sigma_{B}(h_{1})\sigma_{B}(h_{2})\big)\Big)\geq 1-\modulus{E}^{2}\varepsilon'.\]
This proves the Lemma.
\end{proof}

Fix $\kappa>0$ with  $\kappa<\frac{\varepsilon}{12}$ and choose $\varepsilon'>0$ as in Lemma \ref{lem:prelim obs},  so $\varepsilon' < \frac{\varepsilon}{48\modulus{E}^2}$. 
To complete the proof of Proposition \ref{prop:sofic approx}, we break it up into two steps: we first show that this choice of $\varepsilon'$ gives us that $\widehat{\sigma}$ is $(F,\varepsilon)$--multiplicative, and then show it  is $(F,\varepsilon)$--free.

\medskip

\textit{Step 1.} We show that  $\widehat{\sigma}$ is $(F,\varepsilon)$--multiplicative.

To prove Step 1, we apply Lemma \ref{l:almosthomom}, verifying below the four necessary conditions.
 We first check that the restriction to $\oplus_H G$ is $(E_1,\varepsilon/6)$--multiplicative. 
Let $g,g'\in E_{1},$ then
\begin{align*}
d_{\Hamm}&\big(\widehat{\sigma_{A}}(gg'),\widehat{\sigma_{A}}(g)\widehat{\sigma_{A}}(g')\big)\\
&\leq \frac{1}{|A|^{|B|}|B|}\modulus{\bigoplus_{B}A\oplus (B\setminus B_{0})}
+ \frac{\modulus{\big\{(a,b):b\in B_{0},\sigma_{A,b}(g)\sigma_{A,b}(g')\ne \sigma_{A,b}(gg')\big\}}}{|A|^{|B|}|B|}\\
&\leq \kappa+\frac{1}{|B|}\sum_{b\in B_{0}}d_{\Hamm}(\sigma_{A,b}(g)\sigma_{A,b}(g'),\sigma_{A,b}(gg')).
\end{align*}
Write $g=(g_x),g'=(g'_x)$.
 Recall, $\sigma_{A,b}(g) = \prod_{x\in E} \sigma_{A,b}^{(x)}(g_x)$.
In the following, we use that the different terms in the product commute, to be precise: for $x_1\neq x_2 \in E$ we have $[ \sigma_{A,b}^{(x_1)}(g_{x_1}) , \sigma_{A,b}^{(x_2)}(g'_{x_2}) ] = 1$. This gives
$$d_{\Hamm}(\sigma_{A,b}(g)\sigma_{A,b}(g'),\sigma_{A,b}(gg'))
	=	d_{\Hamm}\left(\prod\limits_{x\in E}\sigma_{A,b}^{(x)}(g_x)\sigma_{A,b}^{(x)}(g'_x) , \prod\limits_{x\in E} \sigma_{A,b}^{(x)}(g_xg'_x) \right).$$
Next, using the bi-invariance of the Hamming distance, the triangle inequality, and
	the $(E_A,\varepsilon')$--multiplicativity of $\sigma_A$ to see that
\begin{eqnarray*}d_{\Hamm}(\sigma_{A,b}(g)\sigma_{A,b}(g'),\sigma_{A,b}(gg'))
	&\leq &	\sum\limits_{x\in E} d_{\Hamm}\left(\sigma_{A,b}^{(x)}(g_x)\sigma_{A,b}^{(x)}(g'_x) ,  \sigma_{A,b}^{(x)}(g_xg'_x) \right)\\
	&=&\sum_{x\in E}d_{\Hamm}(\sigma_{A}(g_{x})\sigma_{A}(g_{x}'),\sigma_{A}(g_{x}g'_{x}))\\
	&<&\modulus{E}\varepsilon'.\end{eqnarray*}
Thus we get the required multiplicativity:
\begin{equation*}
d_{\Hamm}(\widehat{\sigma_{A}}(gg'),\widehat{\sigma_{A}}(g)\widehat{\sigma_{A}}(g'))<\kappa+\modulus{E}\varespilon' < \frac{\varepsilon}{6}.
\end{equation*}

The fact that the restriction to $H$ is $(E_2,\varepsilon/6)$--multiplicative is more straight-forward.
Indeed, for $h,h'\in E_{2}$ we have
\begin{equation*}
d_{\Hamm}(\widehat{\sigma_{B}}(hh'),\widehat{\sigma_{B}}(h)\widehat{\sigma_{B}}(h'))=d_{\Hamm}(\sigma_{B}(hh'),\sigma_{B}(h)\sigma_{B}(h'))<\varepsilon',
\end{equation*}
where we note that we can use the multiplicative property of $\sigma_B$ since $E_2 \subseteq E_B$.

The third condition of Lemma \ref{l:almosthomom} is automatically satisfied by $\widehat{\sigma}$, by construction.
We finish this step by verifying the bound on the Hamming distance between $\widehat{\sigma}(1,h)\widehat{\sigma}(g,1)$ and $\widehat{\sigma}(\alpha_h(g),1)\widehat{\sigma}(1,h)$ for $h\in E_{2},g\in E_{1}$. Indeed, for such $g,h$ we have
\begin{align*}
d_{\Hamm}&(\widehat{\sigma_{B}}(h)\widehat{\sigma_{A}}(g),\widehat{\sigma_{A}}(\alpha_{h}(g))\widehat{\sigma_{B}}(h))\\
&\leq \frac{1}{|A|^{|B|}|B|}\left|\left(\bigoplus_{B}A\right)\oplus B\setminus (B_{0}\cap \sigma_{B}(h)^{-1}(B_{0}))\right|\\
&\phantom{aaaaaa} + \frac{1}{|A|^{|B|}|B|}\modulus{\big\{(a,b):b\in B_{0}\cap \sigma_{B}(h)^{-1}B_{0},\sigma_{A,b}(g)a \ne \sigma_{A,\sigma_{B}(h)b}(\alpha_{h}(g))a\big\}}\\
&\leq 2\kappa + \frac{1}{|A|^{|B|}|B|}\modulus{\big\{(a,b):b\in B_{0}\cap \sigma_{B}(h)^{-1}B_{0},\sigma_{A,b}(g)a \ne \sigma_{A,\sigma_{B}(h)b}(\alpha_{h}(g))a\big\}}.
\end{align*}
Since $\supp(\alpha_{h}(g))=h\supp(g)$, and $E$ contains both $\Supp(g)$ and $h\supp(g)$, it follows that for every $b\in B_{0}\cap \sigma_{B}(h)^{-1}(B_{0})$ we have
\[\sigma_{A,\sigma_{B}(h)b}(\alpha_{h}(g))=\prod_{x\in h\supp(g)}\sigma_{A,\sigma_{B}(h)b}^{(x)}(g_{h^{-1}x})=\prod_{x\in \supp(g)}\sigma_{A,\sigma_{B}(h)b}^{(hx)}(g_{x}).\]
Note that we have used that $\sigma_A(1) = \id$ to restrict the number of terms in the product.
We use that for $h\in E$ (and hence for $h\in E_2$) we have that $\sigma_{A,\sigma_{B}(h)b}^{(hx)}(g)=\sigma_{A,b}^{(x)}(g)$.
Inserting this into the above equation we see that
\[ \sigma_{A,\sigma_{B}(h)b}(\alpha_{h}(g))= \prod_{x\in \supp(g)}\sigma_{A,b}^{(x)}(g)= \sigma_{A,b}(g).\]
Returning to the above inequality, we have shown that the remaining sets involved are empty, implying that 
\begin{equation*}
d_{\Hamm}(\widehat{\sigma_{B}}(h)\widehat{\sigma_{A}}(g),\widehat{\sigma_{A}}(\alpha_{h}(g))\widehat{\sigma_{B}}(h))<2\kappa<\frac{\varepsilon}{6}.
\end{equation*}
The hypotheses of Lemma \ref{l:almosthomom} have been checked, completing Step 1.

\medskip

\textit{Step 2.} We show that $\widehat{\sigma}$ is $(F,\varepsilon)$--free.

Suppose $(g,h)\in F.$ If $h\ne 1,$ then
\[\big\{(a,b):\widehat{\sigma}(g,h)(a,b)\ne (a,b)\big\}\supseteq \left(\bigoplus_{B}A\right)\oplus \big\{b\in B:\sigma_{B}(h)b\ne b\big\}.\]
Thus
\[d_{\Hamm}\big(\widehat{\sigma}(g,h),\id\big)\geq d_{\Hamm}\big(\sigma_{B}(h),\id\big)\geq 1-\varepsilon'> 1-\varepsilon.\]
We may therefore assume that $h=1.$
When $b\in B_0$, the permutation $\widehat{\sigma}(g,1)$ will act by $(a,b)\mapsto (\sigma_{A,b}(g)a,b)$, for all $a \in \oplus_B A$.
We then see that
\[\big\{(a,b):\widehat{\sigma}(g,1)(a,b)=(a,b)\big\} \subseteq \left( \left( \bigoplus_{B}A\right) \oplus \big(B\setminus B_{0} \big) \right) \cup \big\{(a,b):b\in B_{0},\sigma_{A,b}(g)a=a\big\}.\]
 Assume $g=(g_x)\neq 1$ and consider the proportion of elements fixed by $\widehat{\sigma}(g,1)$. By the above we have
\[\frac{1}{|A|^{|B|}|B|}\modulus{ \big\{ (a,b) : \widehat{\sigma}(g,1)(a,b) = (a,b) \big\}}
\leq \kappa + \frac{1}{|B|}\sum_{b\in B_{0}}\frac{\modulus{\big\{ a\in \bigoplus_{B}A:\sigma_{A,b}(g)a=a\big\}}}{|A|^{|B|}}.\]
Since $g\ne 1,$ we can find $x_{0}\in \supp(g).$ For every $b\in B_{0},$ we have that $\sigma_{B}(h_{1})b\ne \sigma_{B}(h_{2})b$ for every $h_{1},h_{2}\in \supp(g)$ with $h_{1}\ne h_{2}.$
It follows that the $(\sigma_{B}(x_{0})b)$--coordinate of $\sigma_{A,b}(g)a$ is $\sigma_{A}(g_{x_0})a_\beta$, where $a_\beta$ is the $(\sigma_{B}(x_{0})b)$--coordinate of $a$.
Thus
\[\left\{a\in \bigoplus_{B}A:\sigma_{A,b}(g)a=a\right\}\subseteq \left(\bigoplus_{B\setminus \{\sigma_{B}(x_{0})b\}}A\right) \oplus \big\{a\in A:\sigma_{A}(g_{x_0})a=a \big\}.\]
So
\[\frac{\modulus{ \big\{ a\in \bigoplus_{B}A:\sigma_{A,b}(g)a=a\big\}}}{|A|^{|B|}} \leq 1-d_{\Hamm}(\sigma_{A}(g_{x_0}),1).\]
Since $g_{x_0} \in E_{A}$ we can use the freeness of $\sigma_A$, yielding 
\[\frac{1}{|A|^{|B|}|B|}|\{(a,b):\widehat{\sigma}(g,1)(a,b)=(a,b)\}|\leq\kappa+\frac{|B_{0}|}{|B|}(1-d_{\Hamm}(\sigma_{A}(g),1))\leq \kappa+\varepsilon'.\]
Equivalently,
\[d_{\Hamm}(\widehat{\sigma}(g,1),\id)\geq 1-\kappa-\varepsilon' > 1-\varepsilon.\]
This verifies that $\widehat{\sigma}$ is $(F,\varepsilon)$--free, and thus completes the proof of Proposition \ref{prop:sofic approx}, and hence of Theorem \ref{thmspecial:sofic wreath}.

\end{document}